\newtheorem{theorem}{Theorem}
\newtheorem{lemma}[theorem]{Lemma}
\newtheorem{corollary}[theorem]{Corollary}
\newcommand{\z}{+o(1)}
\begin{document}

\title{Cops and Robbers on graphs of bounded diameter}

\author{
Seyyed Aliasghar Hosseini\hspace{5mm} Fiachra Knox \hspace{5mm} Bojan Mohar\thanks{Supported in part by an NSERC Discovery Grant R611450 (Canada),
   by the Canada Research Chair program, and by the
    Research Grant J1-8130 of ARRS (Slovenia).}~\thanks{On leave from:
    IMFM \& FMF, Department of Mathematics, University of Ljubljana, Ljubljana,
    Slovenia.}\\[5mm]
Department of Mathematics, Simon Fraser University\\
Burnaby, BC V5A 1S6, Canada
}

\date{\today}

\maketitle

\begin{abstract}
The game of Cops and Robbers is a well known game played on graphs. In this paper we consider the class of graphs of bounded diameter. We improve the strategy of cops and previously used probabilistic method which results in an improved upper bound for the cop number of graphs of bounded diameter. In particular, for graphs of diameter four, we  improve the upper bound from $n^{\frac{2}{3}+o(1)}$ to $n^{\frac{3}{5}+o(1)}$ and for diameter three from $n^{\frac{2}{3}+o(1)}$ to $n^{\frac{4}{7}+o(1)}$. 
\end{abstract}

\section{Introduction}
The game of Cops and Robbers was  introduced by Nowakowski and Winkler \cite{nowakowski} and Quilliot \cite{quilliot}. There are several versions of the game and the original one has the following rules.
There are two players, one of them controls the cops and the other one controls the robber. The game is played on the vertices of a graph $G$. At the beginning of the game, each cop will choose a vertex as the \emph{initial position}\index{initial position} and then the robber will choose a position. In each further step of the game, first each cop \emph{moves}\index{move}, where to move means either staying at the same position or changing the position to a neighbor of its current position. After that, the robber moves. Several cops can occupy the same position at any time. 
This is a perfect information game, meaning that each player has full knowledge about the playground (the graph $G$) and positions of all players. 

The cops win the game if, for every strategy of the robber of selecting the initial position and moves, they \emph{catch}\index{catch} the robber, meaning that after their move some cop has the same position as the robber. Otherwise, the robber wins the game.
The smallest number $k$ of cops for which the
cops win the game is called the \emph{cop number}\index{cop number} of $G$ and is denoted by $c(G)$. 

One of the main open questions in this area is Meyniel's conjecture, which asks whether the cop number of any connected graph on $n$ vertices can be bounded by $O(\sqrt{n})$. For more information see \cite{Bonato} and references within.

Lu and Peng \cite{lu-peng} (and independently Scott and Sudakov \cite{scott-sudakov}) proved the following theorem which gives the best known upper bound of the cop number of general graphs.

\begin{theorem} \label{thm:lu-peng}
The cop number of any connected $n$-vertex graph is at most $n2^{-(1+o(1))\sqrt{\log{n}}}$.
\end{theorem}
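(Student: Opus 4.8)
The plan is to build on the elementary fact that $c(G)\le\gamma(G)$, where $\gamma$ denotes the domination number: if the cops start on a dominating set, then after the robber reveals its vertex $u$, some cop adjacent to $u$ steps onto it and wins before the robber can move. Combined with the standard greedy/probabilistic estimate $\gamma(G)=O(\frac{n\log n}{\delta})$ for a graph of minimum degree $\delta$, this already yields a strong bound whenever $G$ is uniformly dense. The entire difficulty is therefore concentrated at vertices with few neighbors, and the idea is to recover a saving even there by working with balls $B(v,r)$ of a radius $r$ that will eventually be optimized near $\sqrt{\log_2 n}$.

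I would then study $G$ through the growth of its balls and split into regimes according to a threshold $m\approx 2^{\sqrt{\log_2 n}}$. In the regime where $|B(v,r)|\ge m$ for the relevant vertices, a random vertex set $S$ with inclusion probability $p=\Theta(\frac{\log n}{m})$ is, by a union bound over the $n$ balls, a \emph{distance-$r$ dominating set} with positive probability and of size $|S|=O(\frac{n\log n}{m})$; a maximal $r$-packing gives a comparable bound deterministically, since the disjoint small balls around the packing points sum to at most $n$. Because in an expanding neighborhood a ball of radius $r$ has size growing geometrically in $r$, balancing the cover cost $\frac{n\log n}{m}$ against the radius $r$ (equivalently, taking $m\approx(\text{growth})^{r}$ with $r\approx\sqrt{\log n}$) is exactly what produces the exponent $2^{-(1+o(1))\sqrt{\log n}}$. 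Vertices whose balls fail to reach size $m$ even at radius $r$ form the complementary, low-expansion regime, which I would handle by a separate multi-scale argument: at whichever scale the ball growth stalls, the nearly-closed ball serves as a cheap trap, and summing these contributions across scales keeps the extra cost within the same order.

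The step I expect to be the crux — and the one that forces the $r$-round buffer to be exploited rather than wasted — is converting a distance-$r$ dominating set into an honest capturing strategy. Unlike a genuine dominating set, a distance-$r$ dominating set does not win in one move: a cop lying within distance $r$ of the robber still needs up to $r$ turns to arrive, and a naive pursuit lets the robber slip away. The remedy is a shepherding (shadow) strategy in which the cops use their $r$ turns of slack to mirror the robber's movements and progressively shrink the territory it can reach, so that proximity is eventually converted into capture. Making this shadowing rigorous — guaranteeing that the robber cannot indefinitely exploit the $r$-round delay, and interleaving it with the low-expansion trapping so that the two strategies jointly cover every vertex without interfering — is the main technical obstacle; granting it, the parameter optimization above delivers the stated bound.
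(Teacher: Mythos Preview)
The paper does not prove Theorem~\ref{thm:lu-peng}; it is quoted from \cite{lu-peng} and \cite{scott-sudakov} as background, and the paper's own contributions are refinements of that argument for bounded diameter. So there is no ``paper's own proof'' to compare against directly, but the Lu--Peng/Scott--Sudakov mechanism is rehearsed in Sections~2--3 (Lemma~\ref{lem:tool} and the diameter-$4$ discussion), and your sketch diverges from it precisely at the step you yourself flag as the crux.

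Your outline correctly locates the parameter regime --- random cops at density $p\approx 2^{-\sqrt{\log n}}$, roughly $\sqrt{\log n}$ scales, separate handling of slowly growing balls --- but the ``shepherding'' step is a genuine gap. A distance-$r$ dominating set does not, by any generic shadowing, yield a capture strategy: a single cop at distance $r$ from the robber in an arbitrary graph has no way to ``mirror the robber's movements,'' and no amount of parameter balancing repairs this. The actual argument never tries to convert one far-away cop into a pursuer. It works scale by scale via Hall's theorem: one attempts to match each vertex of the robber's current territory $T$ to a distinct cop within distance $2^{k}$ (edges of an auxiliary bipartite graph being paths of length $\le 2^{k}$ in $G$). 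If the matching exists, the cops slide along it and occupy $T$ before the robber can exit. If not, Hall's condition produces a set $S\subseteq T$ with $|B(S,2^{k})\cap\mathcal C|<|S|$, and the random-cop property (Lemma~\ref{lem:tool}) then forces $|B(S,2^{k})|$ itself to be small; the robber is confined there at the next scale. Iterating with radii $1,2,4,\dots$ for about $\sqrt{\log n}$ rounds gives the bound. Your proposal has the right overall shape (territory shrinking over $\sqrt{\log n}$ stages) but substitutes an undefined shadow strategy for the Hall/matching step that actually does the work; without that ingredient the argument does not close.
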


The logarithm in Theorem \ref{thm:lu-peng} as well as all other logarithms in this paper are taken base 2.
The following is a specific case of their results.

\begin{corollary}
The cop number of any connected $n$-vertex graph of diameter $d$ is at most $n^t$, where $t= {1-\frac{1}{\lceil \log{d}\rceil +1}+o(1)}$.
\end{corollary}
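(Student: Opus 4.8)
The plan is to obtain the bound by specializing the probabilistic density-increment argument behind Theorem~\ref{thm:lu-peng} to the bounded-diameter setting; the point is that the diameter caps the number of \emph{scales} over which that argument must iterate. Write $t=\lceil\log d\rceil+1$ and set the threshold parameter $m=n^{1/t}$, so that $n/m=n^{1-1/t}$. The cops will be deployed in at most $t$ phases, and the goal is to show that each phase can be carried out with at most $n^{1-1/t}$ cops times a polylogarithmic factor, while halving the radius of the region to which the robber is confined. Since $t=O(\log d)=n^{o(1)}$, accumulating the cop counts over all phases contributes only to the $o(1)$ in the exponent, so it suffices to control a single phase.

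First I would fix the relevant scales to be the radii $1,2,4,\dots,2^{\lceil\log d\rceil}\ge d$, of which there are exactly $t$. For a fixed vertex $v$, telescoping the chain of ball sizes $1=|B(v,0)|\le|B(v,1)|\le\cdots\le|B(v,2^{\lceil\log d\rceil})|=n$ shows that the product of the $t$ consecutive ratios equals $n$, so by the AM–GM/pigeonhole bound there is a scale at which the ball grows by a factor of at most $m=n^{1/t}$. At such a slow-growth scale the balls being dominated have size at least about $m$, so one can greedily choose an $r$-dominating set (a set meeting every ball of radius $r$) of size $O\!\left((n/m)\log n\right)=n^{1-1/t+o(1)}$. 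Placing cops on such a set and letting them react to the robber should force the robber out of a confinement ball of radius $r$ into one of radius $\lceil r/2\rceil$; after at most $\lceil\log d\rceil$ halvings the radius drops below $1$ and one final phase catches the robber.

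The main obstacle is the dynamic \emph{confinement} step: a dominating or covering set only controls the graph statically, whereas we need the guards to shrink the set of vertices the robber can still reach as it moves. Making this rigorous requires a shadow/projection strategy in the spirit of Aigner--Fromme, in which each guard tracks the robber's position relative to the ball it covers, so that the robber cannot cross a guarded region without being caught; one must verify that the invariant ``the robber is confined to a ball of the current radius'' is genuinely maintained from one phase to the next. A secondary technical point, handled by the same dichotomy as in the proof of Theorem~\ref{thm:lu-peng}, is that different vertices may realize their slow-growth scale at different radii: if at some scale the balls are too small for the covering bound, then the robber is trapped in a subgraph on fewer than $m$ vertices of smaller diameter and one recurses there, while otherwise the balls are large and the covering bound applies directly. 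Finally I would optimize the choice $m=n^{1/t}$ to balance the per-phase guard counts across the $t$ scales, which is exactly what yields the exponent $1-\frac{1}{\lceil\log d\rceil+1}$, with all $\log n$ and $t$ factors absorbed into the $o(1)$.
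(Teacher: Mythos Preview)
The paper does not prove this corollary; it simply records it as the specialization of the Lu--Peng / Scott--Sudakov argument (Theorem~\ref{thm:lu-peng}) to diameter~$d$. That argument, as the paper later spells out (see Section~\ref{diam4} and the proof of Theorem~\ref{main}), works as follows: place a random set $\mathcal{C}$ of cops with density $p=n^{-\alpha}$, and for the robber's position $r$ apply Hall's theorem to the bipartite graph between $B(r,1)$ and $\mathcal{C}\cap B(r,3)$ (edges being ``distance $\le 2$'') to extract a defect set $S_1\subseteq B(r,1)$; Lemma~\ref{lem:tool} then gives $|B(S_1,2)|<n^{2\alpha+o(1)}$, and iterating with radii $2,4,\dots$ yields $|B(S_{2^k-1},2^k)|<n^{(k+1)\alpha+o(1)}$. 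After $k=\lceil\log d\rceil-1$ doublings, a fresh batch of $n^{1-\alpha}$ cops can reach and occupy the last ball in $\le d$ steps, so one needs $(\lceil\log d\rceil)\alpha\le 1-\alpha$, i.e.\ $\alpha=\frac{1}{\lceil\log d\rceil+1}$.

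Your telescoping/pigeonhole picture correctly identifies \emph{why} there are $\lceil\log d\rceil+1$ scales and hence why that exponent appears, but the mechanism you propose for the confinement step is a genuine gap. An $r$-dominating set only guarantees that every vertex has a cop within distance $r$; it gives that cop no way to intercept a moving robber, and there is no analogue in general graphs of the Aigner--Fromme geodesic projection that would let a cop ``shadow'' a robber across a ball. You flag this as ``the main obstacle'' and leave it at ``should force the robber out,'' which is exactly where the argument breaks. (The claim that at a slow-growth scale ``the balls being dominated have size at least about $m$'' is also unjustified: $|B(v,2r)|\le m\,|B(v,r)|$ gives no lower bound on $|B(v,r)|$.)

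The fix is not a shadowing argument but the Hall/matching step above: the cops in $\mathcal{C}\cap B(S,2r)$ are \emph{matched} to vertices of $B(S,r)$ along paths of length $\le r$ and physically move there, so that the robber entering $B(S,r)$ outside the Hall defect set $S'$ is caught; the robber is thereby funneled into $S'$, and Lemma~\ref{lem:tool} bounds $|B(S',r)|$. That concrete occupation-by-matching is what replaces your dominating-set heuristic and makes the induction over scales go through.
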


Our Theorem \ref{main} gives an improvement of this bound for $d\geq 4$, while for graphs of diameter 3, we have an even better bound -- see Theorem \ref{thm:3}. Our third main result is Theorem \ref{high_girth}, which gives an improved bound of Theorem \ref{main} when the girth of the graph is large. At the end of the paper we also add a new bound on the cop number of digraphs of small diameter, see Theorem \ref{thm:dir}.

In the proofs we use a novel strategy based on ``real and imaginary cops''. We first consider a strategy where a large set of ``imaginary cops'' is used. We show that these cops are able to surround the robber in such a way that many of them will catch the robber at the same time. When there are only polynomially many scenarios for capturing the robber, we show that there exists a subset of the imaginary cops (and these are the ``real cops'') such that in each scenario, at least one of the real cops will participate in capturing the robber.  We also show that it is possible to repeat the basic probabilistic strategy of \cite{lu-peng} and \cite{scott-sudakov} over and over again such that after each iteration we shrink the size of the neighborhood, where the robber can move without being captured.

\section{Real and imaginary cops}

Lu and Peng in \cite{lu-peng} and Scott and Sudakov in \cite{scott-sudakov} used random positioning\index{random position} of cops to analyze the game. We will restate their starting tool in a more general language.

Let $\mathcal{C}=\mathcal{C}(V,p)$ be a random subset of a set $V$ with $|V|=n$, where each $v\in V$ is in $\mathcal{C}$ with probability $p$, independently from other elements in $\mathcal{C}$. Since $|\mathcal{C}|$  is binomially distributed\index{binomially distribution} with expectation\index{expectation} $\mu=n\cdot p$, by the standard Chernoff-type estimate\index{Chernoff-type estimate}, the probability that $\mathcal{C}$ has more than $2\mu =2np$ vertices is at most $e^{-\mu/3}$.

For every subset $A$ of vertices of $G$ and each integer $i$, let $B(A,i)$ be the ball of radius $i$ around $A$, that is the set of all vertices of $G$ that can be reached from some vertex in $A$ by a path of length at most $i$. For simplicity, when $A$ is a single vertex $v$, we write $B(v,i)$. We need the following lemma.

\begin{lemma}\label{lem:tool}
Let $G$ be a connected graph of order $n$ and let $\mathcal{C}=\mathcal{C}(V(G),p)$. For every $n\geq 333$, the following statement holds with probability at least $0.9$: For every $A\subset V(G)$ and every $i$ such that $|B(A,i)|\geq |A|\frac{\log^2{n}}{p}$, we have 
\begin{equation}
    |B(A,i)\cap \mathcal{C}|\geq |A|.
    \label{ball-ineq}
\end{equation}
\end{lemma}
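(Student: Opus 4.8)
The plan is to prove this via a union bound over all the relevant "bad events," where a bad event is that some pair $(A,i)$ with $|B(A,i)|$ large enough fails inequality~\eqref{ball-ineq}. The key structural observation is that we do not need to take a union bound over all of the exponentially many subsets $A$: for a fixed ball $B=B(A,i)$, the inequality $|B\cap\mathcal{C}|\ge |A|$ only gets harder to satisfy as $|A|$ grows, and the hypothesis forces $|A|\le |B|\,p/\log^2 n$. So the quantity that really matters for a fixed target set $B$ is whether $|B\cap\mathcal{C}|\ge |B|\,p/\log^2 n$. Moreover, each set $B(A,i)$ is itself a ball around a set, but the crucial simplification is that there are at most $n$ choices for the pair (center structure, radius) that matter once we reorganize correctly; concretely I would index the bad events by the ball $B$ itself together with the threshold $m=|A|$, and note $B$ ranges over at most some polynomial (in fact I will argue at most $n^2$ or fewer) number of distinct sets, since a ball $B(A,i)$ for varying $A$ and $i$ takes boundedly many values relative to $n$.

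First I would fix a candidate "critical" set $S\subseteq V(G)$ playing the role of $B(A,i)$, and let $m$ be a candidate value of $|A|$. The hypothesis $|S|\ge m\log^2 n/p$ means $\mu_S := |S|\,p \ge m\log^2 n$. Since $|S\cap\mathcal{C}|$ is binomially distributed with mean $\mu_S$, I would apply the lower-tail Chernoff bound to estimate $\Pr[\,|S\cap\mathcal{C}| < m\,]$. Because $m \le \mu_S/\log^2 n$, the target $m$ is a factor of roughly $\log^2 n$ below the mean, so this is a deep lower-tail deviation and the Chernoff bound gives a probability bound like $e^{-c\,\mu_S}$ with $\mu_S\ge m\log^2 n\ge \log^2 n$; this is superpolynomially small in $n$. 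Summing this tiny quantity over the polynomially many choices of $(S,m)$ keeps the total failure probability well under $0.1$ once $n\ge 333$, which yields the claimed $0.9$.

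The step I expect to be the main obstacle is bounding the number of pairs $(A,i)$ — equivalently the number of distinct sets $B=B(A,i)$ and thresholds $m$ — over which the union bound is taken, so that the superpolynomially small per-event probability wins. Taking a union over all subsets $A$ directly would be fatal ($2^n$ events), so the real work is the reduction: I would argue it suffices to handle, for each integer $m$ with $1\le m\le n$ and each distinct ball $B$, the single worst event $\{|B\cap\mathcal{C}|<m\}$, and that the monotonicity in $|A|$ lets one reduce any $A$ with $|A|=m$ to this one event. One clean way is to observe that if \eqref{ball-ineq} holds for every pair with the specific threshold $|B(A,i)\cap\mathcal{C}|\ge |B(A,i)|\,p/\log^2 n$, then it holds for every $A$ satisfying the hypothesis, because $|A|\le |B(A,i)|\,p/\log^2 n$. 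That converts the statement into a bound over at most $n^{2}$ many events (at most $n$ choices of center-ish set times at most $n$ radii, or simply over all $O(n)$ distinct balls), each failing with probability at most $e^{-\Omega(\log^2 n)}=n^{-\omega(1)}$.

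I would close by checking the constants: verifying that $n\cdot n^{-\omega(1)} = o(1)$ and in fact is below $0.1$ for all $n\ge 333$, using the explicit Chernoff constant from the $\log^2 n$-factor deviation. The threshold $n\ge 333$ presumably arises precisely from making the constant in the exponent beat the polynomial count in the union bound, so the final arithmetic step is to confirm that $333$ is large enough for the chosen Chernoff constant; that is a routine but essential numeric verification rather than a conceptual difficulty.
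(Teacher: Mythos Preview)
Your proposal has a genuine gap at exactly the step you flagged as the main obstacle: you assert that the number of distinct balls $B(A,i)$ is polynomial in $n$ (``at most $n$ choices of center-ish set times at most $n$ radii''), but this is false. The set $A$ ranges over \emph{all} subsets of $V(G)$, not over single vertices; already for $i=0$ we have $B(A,0)=A$, so every one of the $2^n$ subsets of $V(G)$ arises as a ball. Your ``clean way'' reformulation --- proving $|S\cap\mathcal{C}|\ge |S|\,p/\log^2 n$ for every ball $S$ --- therefore still requires a union bound over exponentially many events, and in fact that inequality cannot hold for all sets $S$ simultaneously (take $S=V(G)\setminus\mathcal{C}$).

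The paper's proof embraces the exponential union bound rather than trying to avoid it. It fixes $a=|A|$ and sums over all $\binom{n}{a}$ subsets of size $a$ and all $i\le d(G)$; the point is that the Chernoff tail for a single pair $(A,i)$ is at most $e^{-a\log^2 n/3}$, while $\binom{n}{a}\le e^{a\log n}$, so the product is at most $e^{-a\log^2 n/6}$ once $\log n\ge 6$. Summing the resulting geometric series in $a$ and absorbing the factor $d(G)<n$ gives total failure probability below $0.1$ for $n\ge 333$. The $\log^2 n$ (rather than $\log n$) in the hypothesis is precisely what is needed to beat the entropy cost $\binom{n}{a}$; with your polynomial-count approach a single $\log n$ would have sufficed, which is a tell that the counting step was too optimistic.
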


\begin{proof}
Let $a=|A|$. Note that for fixed $A$ and $i$, the number of
vertices in $\mathcal{C}\cap B(A,i)$ is binomially distributed
with expectation at least $a\frac{\log^2{n}}{p}\cdot p=a \log^2{n}$, and by the standard
Chernoff-type estimate we have that the probability that
$|B(A,i)\cap \mathcal{C}|<a$  is at most $e^{-a\log^2{n}/3}$. The number of sets of size
$a$ is ${{n}\choose{a}}$ and the number of different choices of $i$
is at most the diameter of the graph, $d(G)$. Since inequality \ref{ball-ineq} holds for $a=0$, the
statement is true with probability at least
\begin{center}
$1-d(G)\sum_{a=1}^n {{n}\choose{a}} e^{-a\log^2{n}/3}.$    
\end{center}

Since $${{n}\choose{a}} \leq \frac{n^a}{a!}\leq e^{a\log n}$$
we have
\begin{center}
$1-d(G)\sum_a {{n}\choose{a}} e^{-a\log^2{n}/3}\geq 1- d(G) \sum_a e^{a\log{n} - a\log^2{n}/3}\geq 1- d(G) \sum_a e^{-a\log^2{n}/6}.$
\end{center}
The last inequality holds when $\log{n} - \log^2{n}/3 \leq -\log^2{n}/6$ which is true for $n\geq e^3$. So the statement is true with probability at least
\begin{center}
$ 1- d(G)\sum_a (e^{-\log^2{n}/6})^a\geq 1- d(G) \frac{e^{-\log^2{n}/6}}{1-e^{-\log^2{n}/6}}=1- d(G) \frac{1}{e^{\log^2{n}/6}-1}.$
\end{center}

Note that $d(G)<n$ and $e^{\log^2{n}/6}>2^{\log^2{n}/6}=n^{\log{n}/6}$. Thus, in order to guarantee the truth of the statement with probability at least 0.9, it satisfies to have
\begin{center}
$1- d(G) \frac{1}{e^{\log^2{n}/6}-1}>1- \frac{n}{n^{\log{n}/6}-1}>0.9,$
\end{center}
which is true for $n\geq 333$.
\end{proof}

Although the above proof is using  properties of random sets, it is important to observe that there exist an actual set $\mathcal{C}$ with the desired property.

\begin{corollary} \label{cor:1}
For any $0<p\leq 1$, there is a set $\mathcal{C}\subseteq V(G)$ with $|\mathcal{C}|\leq 2np$ such that for every $i\in \mathbb{N}$ and for every $A\subseteq V(G)$ with $|B(A,i)|\geq |A|\frac{\log^2{n}}{p}$ we have $|B(A,i)\cap \mathcal{C}|\geq |A|$.
\end{corollary}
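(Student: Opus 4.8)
The plan is to derandomize the probabilistic construction by combining Lemma~\ref{lem:tool} with the Chernoff concentration of $|\mathcal{C}|$ recorded at the start of this section, via a union bound over two failure events. First I would take $\mathcal{C}=\mathcal{C}(V(G),p)$ to be exactly the random set from Lemma~\ref{lem:tool}. Let $E_1$ be the event that inequality~\eqref{ball-ineq} holds simultaneously for every $A\subseteq V(G)$ and every $i$ with $|B(A,i)|\geq |A|\frac{\log^2 n}{p}$, and let $E_2$ be the event $\{|\mathcal{C}|\leq 2np\}$. Lemma~\ref{lem:tool} gives $\Pr[E_1]\geq 0.9$ for $n\geq 333$, and it is worth noting that the bound produced there (namely $1-d(G)/(e^{\log^2 n/6}-1)$) does not depend on $p$. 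The Chernoff-type estimate quoted at the beginning of Section~2, applied to $|\mathcal{C}|$ (binomial with mean $np$), gives $\Pr[\overline{E_2}]=\Pr[|\mathcal{C}|>2np]\leq e^{-np/3}$.

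Then I would apply the union bound $\Pr[\overline{E_1}\cup\overline{E_2}]\leq 0.1+e^{-np/3}$. As soon as this quantity is strictly less than $1$, i.e.\ whenever $e^{-np/3}<0.9$, there is positive probability that $E_1$ and $E_2$ occur simultaneously, and hence a concrete set $\mathcal{C}$ with both the size bound and the ball property must exist. This is precisely the existence claim of the corollary, and it is the same ``a positive-probability event is nonempty'' reasoning already flagged in the remark preceding the statement.

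The only point needing care, and the step I would single out as the main (minor) obstacle, is the regime where $np$ is so small that $e^{-np/3}$ exceeds $0.9$ and the union bound degenerates. Here I would dispatch the statement directly and vacuously: if $p<\frac{\log^2 n}{n}$, then for every nonempty $A$ we have $|A|\frac{\log^2 n}{p}\geq \frac{\log^2 n}{p}>n\geq |B(A,i)|$, so no nonempty $A$ satisfies the hypothesis $|B(A,i)|\geq |A|\frac{\log^2 n}{p}$. Consequently the requirement~\eqref{ball-ineq} is trivially met by $\mathcal{C}=\emptyset$, which also satisfies $|\mathcal{C}|=0\leq 2np$.

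Finally I would check that the two regimes overlap to cover the full range $0<p\leq 1$. Since $\log^2 n>3\ln(10/9)$ for $n\geq 333$, the vacuous regime $np<\log^2 n$ and the union-bound regime $np>3\ln(10/9)$ together exhaust every admissible value of $np$, so a suitable $\mathcal{C}$ exists in all cases. Everything beyond this case split is routine, so I expect the bookkeeping of the small-$p$ boundary to be the only genuinely delicate part.
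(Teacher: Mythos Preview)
Your argument is correct and follows exactly the approach the paper intends: the corollary is stated without proof, relying only on the remark that a random $\mathcal{C}$ satisfies Lemma~\ref{lem:tool} with probability at least $0.9$ and the Chernoff bound on $|\mathcal{C}|$, so a union bound yields a deterministic $\mathcal{C}$. Your treatment is in fact more careful than the paper's, since you explicitly handle the small-$p$ regime (where $e^{-np/3}$ is not small) by observing the ball condition becomes vacuous there.
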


Now let us select a set $\mathcal{I}$ randomly from vertices of $G$  which has the property of Corollary \ref{cor:1} for probability $p_1$. Next, from vertices in $\mathcal{I}$ we select a subset $\mathcal{R}$ by taking each element of $\mathcal{I}$ with probability $p_2$.
In Lemma \ref{lem:tool} and Corollary \ref{cor:1}, $B(A,i)$ can be changed to $B(A,i)\cap \mathcal{I}$ and we obtain the following corollary.

\begin{corollary}\label{cor:2}
Let $\mathcal{I}$ be a fixed set of vertices of $G$ and let $0< p \leq 1$. There is a set $\mathcal{R}\subseteq \mathcal{I}$ with $|\mathcal{R}|\leq 2|\mathcal{I}|p$ such that for every $i\in \mathbb{N}$ and for every $A\subseteq V(G)$ with $|B(A,i)\cap \mathcal{I}|\geq |A|\frac{\log^2{n}}{p}$ we have $|B(A,i)\cap \mathcal{R}|\geq |A|$.
\end{corollary}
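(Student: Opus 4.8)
The plan is to repeat the probabilistic argument of Lemma \ref{lem:tool} essentially verbatim, but with the random sampling carried out inside the fixed ground set $\mathcal{I}$ rather than inside all of $V(G)$. Concretely, I would let $\mathcal{R}=\mathcal{R}(\mathcal{I},p)$ be the random subset of $\mathcal{I}$ obtained by placing each vertex of $\mathcal{I}$ into $\mathcal{R}$ independently with probability $p$, and then show that with positive probability $\mathcal{R}$ simultaneously satisfies the size bound $|\mathcal{R}|\leq 2|\mathcal{I}|p$ and the covering inequality $|B(A,i)\cap \mathcal{R}|\geq |A|$ for all admissible $A$ and $i$. Any set realizing this event is then the desired $\mathcal{R}$, exactly as in the passage from Lemma \ref{lem:tool} to Corollary \ref{cor:1}.

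For the covering property, fix $A$ with $|A|=a$ and an integer $i$ with $|B(A,i)\cap \mathcal{I}|\geq a\frac{\log^2 n}{p}$. Since $\mathcal{R}\subseteq \mathcal{I}$, we have $B(A,i)\cap \mathcal{R}=(B(A,i)\cap \mathcal{I})\cap \mathcal{R}$, so $|B(A,i)\cap \mathcal{R}|$ is a sum of $|B(A,i)\cap \mathcal{I}|$ independent Bernoulli$(p)$ variables, with expectation at least $\frac{a\log^2 n}{p}\cdot p=a\log^2 n$. The key point is that the factor $\frac{1}{p}$ built into the hypothesis cancels the sampling probability $p$, so the expected count is $a\log^2 n$ regardless of $p$ and of which ground set we sample from. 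Hence the same Chernoff estimate as before yields $\Pr[|B(A,i)\cap \mathcal{R}|<a]\leq e^{-a\log^2 n/3}$, matching the bound in Lemma \ref{lem:tool} term for term.

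From here the union bound is literally the same computation. There are at most $\binom{n}{a}$ sets $A$ of size $a$ and at most $d(G)<n$ choices of $i$, and bounding $d(G)\sum_a \binom{n}{a}e^{-a\log^2 n/3}$ as in Lemma \ref{lem:tool} shows the covering property holds with probability at least $0.9$ for $n\geq 333$. Separately, $|\mathcal{R}|$ is binomially distributed with mean $|\mathcal{I}|p$, so the Chernoff-type estimate gives $\Pr[|\mathcal{R}|>2|\mathcal{I}|p]\leq e^{-|\mathcal{I}|p/3}$. When the two failure probabilities sum to less than $1$, a union bound produces a set $\mathcal{R}$ enjoying both properties, completing the derandomization.

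Honestly, I expect no genuinely hard step here: the whole content is the observation that sampling from $\mathcal{I}$ and replacing $B(A,i)$ by $B(A,i)\cap \mathcal{I}$ changes neither the expectation $a\log^2 n$ nor the index range of the union bound (which runs over subsets $A\subseteq V(G)$ and radii $i$, both independent of the sampling ground set). The only point I would check carefully is the boundary regime where $|\mathcal{I}|p$ is small, since there the size-bound failure probability $e^{-|\mathcal{I}|p/3}$ need not be small enough to keep the total failure probability below $1$; in the intended application $|\mathcal{I}|p$ is large, so the conjunction of the two events indeed has positive probability and the existence conclusion follows.
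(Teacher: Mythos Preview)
Your proposal is correct and follows exactly the approach the paper indicates: the paper's own ``proof'' is the single sentence that in Lemma~\ref{lem:tool} and Corollary~\ref{cor:1} one may replace $B(A,i)$ by $B(A,i)\cap\mathcal{I}$ and sample $\mathcal{R}$ from $\mathcal{I}$ with probability $p$, which is precisely what you spell out. Your observation about the boundary regime where $|\mathcal{I}|p$ is small is a fair caveat that the paper does not address either; as you note, it is irrelevant in the applications.
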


\section{Graphs of diameter four}
\label{diam4}

Let $G$ be a graph of diameter at most 4 on $n$ vertices.  Lu and Peng \cite{lu-peng} proved that the cop number of every such graph is at most $n^{\frac{2}{3}+o(1)}$. In this section  we will first improve this result to $n^{\frac{5}{8}+o(1)}$ and then to $n^{\frac{3}{5}+o(1)}$.

Let $\mathcal{C}$ be a random subset of vertices of $G$, where a vertex $v$ is in $\mathcal{C}$ with probability $p=n^{-\frac{3}{8}}$. As discussed above, $\mathcal{C}$ has less than $2\mu=2np = 2n^{\frac{5}{8}}$ vertices with probability at least $1-e^{np/3}$. We will put one cop on each of the vertices in $\mathcal{C}$.

Although we speak of a random set $\mathcal{C}$, what we mean is a concrete set $\mathcal{C}\subseteq V(G)$ that satisfies the condition of Corollary \ref{cor:1}.

Let $r$ be the position of the robber. If the size $|B(r,1)|$ of the neighborhood of $r$ is greater than $n^{\frac{3}{8}}\cdot \log^2{n}$, then by the property of Corollary \ref{cor:1} there is a cop in the robber's neighborhood who will capture the robber at the very beginning. So we may assume that $|B(r,1)|<n^{\frac{3}{8}}\cdot \log^2{n}$. 

Consider the bipartite graph $H$ with partition classes $B(r,1)$ and $B(r,3)\cap \mathcal{C}$ defined as follows. The edge $uv$ exists in $H$ if and only if there is a path of length at most 2 between (the corresponding vertices) $u$ and $v$ in $G$. Note that $B(r,3)\cap \mathcal{C}$ might be empty and also we may assume that $B(r,1)$ and $B(r,3)\cap \mathcal{C}$ are disjoint sets (a cop in $B(r,1)$ can capture the robber immediately). If we can move some cops from $B(r,3)$ in at most 2 moves to occupy all vertices of $B(r,1)$, then there is a matching in $H$ that covers all vertices of $B(r,1)$. This means that in one move, the cops can guard $B(r,1)$ and the robber cannot move and therefore will be captured. 

So we may assume that this matching does not exist. Therefore, by Hall's Theorem, there is a set $S_1 \subseteq B(r,1)$ such that $|S_1| > |N_H(S_1)| = |B(S_1,2) \cap \mathcal{C}|$. Furthermore, by Hall's Theorem there exist a set $S_1$ such that there is a matching in $H$ covering $B(r,1)\setminus S_1$. Therefore the cops in $\mathcal{C}\cap B(S_1,2)$ can move so that all vertices in $B(r,1) \setminus S_1$ will be guarded by cops after their move, and the robber cannot use them or will get caught.
As a consequence $|B(S_1,2) \cap \mathcal{C}| < |B(r,1)|$. If $|B(S_1,2)| \geq n^\frac{3}{8} |B(r,1)| \cdot \log^2{n}$, then by Lemma \ref{lem:tool}, $|B(S_1,2) \cap \mathcal{C}|\geq |B(r,1)|$. This would be a contradiction. So we may assume that $|B(S_1,2)| < n^\frac{3}{8} |B(r,1)| \cdot \log^2{n} \leq n^{\frac{3}{4}}\cdot \log^4{n}$.

\begin{figure}[H]
    \centering
    \includegraphics[width=0.8\textwidth]{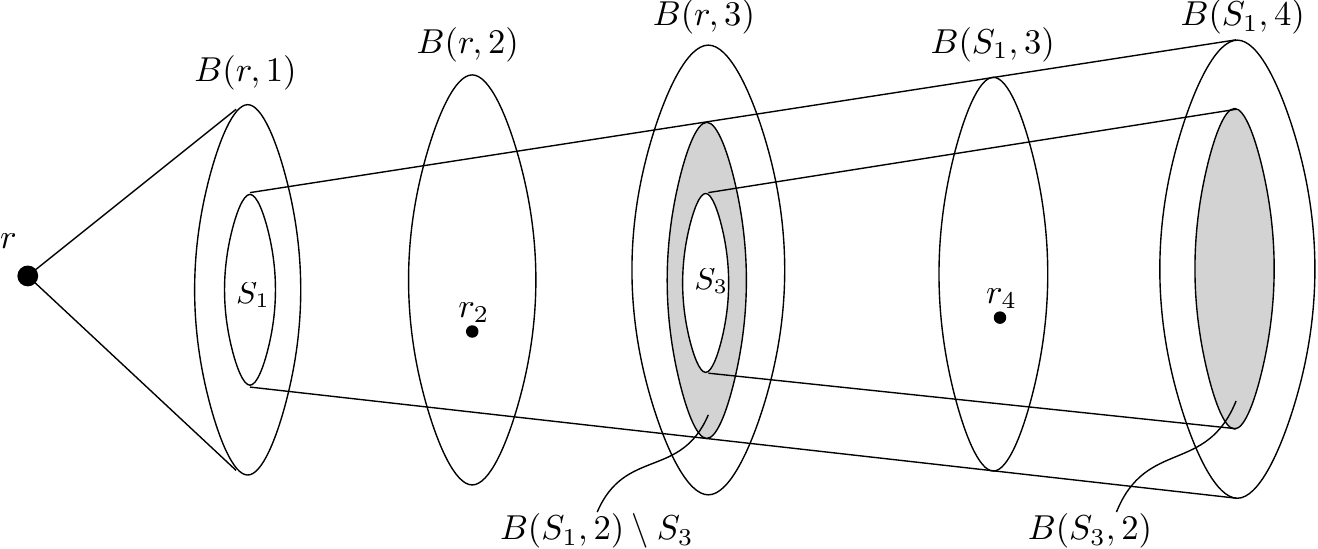}
    \caption{The neighborhood of $r$ as the robber moves. The sets shown are not disjoint. Note that $B(r,2)\subseteq B(r,3)$ and $B(r,2)\subseteq B(S_1,3)$, etc. The robber will be in the $i^{th}$ set $B(r,i)$ after $i$ moves and if goes to the set $S_1$ in the first step, will be in the set $B(S_1,i-1)\subseteq B(r,i)$.}
    \label{fig:main}
\end{figure}

Let us mention that this basically proves that $c(G)\leq n^{\frac{3}{4}+o(1)}$. Note that since the diameter is 4, any cop can move to any vertex in $B(S_1,2)$ in 4 moves and also can guard it (get to one of its neighbors) in 3 moves. Therefore, since $|B(S_1,2)|<n^{\frac{3}{4}+o(1)}$, we can move our (additional set of $n^{\frac{3}{4}+o(1)}$) cops and in 3 moves guard all vertices in $B(S_1,2)$, before the robber is able to reach any vertex in $B(S_1,2)\setminus B(S_1,1)$, through which he could have escaped out of $B(S_1,2)$.
Also if we would consider $p=n^{-\frac{1}{3}}$, then we would be able to argue that  $c(G)<n^{\frac{2}{3}+o(1)}$.
This strategy has been used in the work of Lu and Peng \cite{lu-peng} and also by Scott and Sudakov \cite{scott-sudakov}. However, there is some improvement possible that was not discovered in \cite{lu-peng, scott-sudakov}.

Our plan is to send some cops from $B(S_1,4)$ to occupy vertices in $B(S_1,2)$. 
Let $\mathcal{I}$ be a random subset of vertices of $G$, where a vertex $v$ is in $\mathcal{I}$ with probability $n^{-1/8} \cdot \log^8{n}$. The probability that  $|\mathcal{I}|$ is smaller than $n^{7/8} \cdot \log^6{n}$ is at most $e^{-n^{7/8} \cdot \log^8{n}\,/\,3}$. We will call vertices in $\mathcal{I}$ \emph{imaginary cops}\index{imaginary cop}. Also let $\mathcal{R}$ be a random subset of vertices of $\mathcal{I}$, where a vertex $v$ is in $\mathcal{R}$ (independently uniformly at random) with probability $p'=n^{-\frac{1}{4}}$. We would like to put one (real) cop on each vertex of $\mathcal{R}$. Note that $|\mathcal{R}|>2 |\mathcal{I}|p'=2n^{\frac{5}{8}} \cdot \log^6{n}$ with probability at most $e^{-n^{5/8} \cdot \log^6{n}\,/\,3}$.
Also note that the (real) cops that we are using in this step are different from those used in the first step.


Now we consider the bipartite graph $H'$ with parts $B(S_1,2)$ and $B(S_1,4)\cap \mathcal{I}$ defined in the same way as $H$ was defined above (if there is a vertex in both parts remove it from $B(S_1,2)$ to make the two sets disjoint).
If there is a matching in $H'$ between vertices in $B(S_1,2)$ and the imaginary cops in $B(S_1,4)$ then we can occupy $B(S_1,2)$ with imaginary cops in two moves. Otherwise, using the previous argument and Hall's Theorem\index{Hall's Theorem}, there is a set $S_3 \subseteq B(S_1,2)$ such that $|S_3| > |B(S_3,2) \cap \mathcal{I}|$. Thus, $|B(S_3,2) \cap \mathcal{I}| < |S_3| \leq |B(S_1,2)|$ and there is a matching between $B(S_3,2) \cap \mathcal{I}$ and $B(S_1,2)\setminus S_3$. Now by Lemma \ref{lem:tool}, if $|B(S_3,2)| \geq n^\frac{1}{8} |B(S_1,2)|\cdot \log^2{n}$, then  $|B(S_3,2) \cap \mathcal{I}| \geq |B(S_1,2)|$, which is a contradiction. Therefore we have  $|B(S_3,2)| < n^\frac{1}{8} |B(S_1,2)|\cdot \log^2{n} < n^\frac{7}{8}\cdot \log^6{n}$. 

Note that since we have at least $n^\frac{7}{8}\cdot \log^6{n}$ imaginary cops and the set $B(S_3,2)$ is at distance at least 3 from $r$, we will have enough time to get our (imaginary) cops to occupy all vertices in $B(S_3,2)$ (since the diameter is four and the cops start the move, this is possible). Therefore after two (case 1) or four moves (case 2), the robber's entire neighborhood will be occupied by imaginary cops.

Let us discuss these two cases separately. At the beginning, the robber is in $r$ and is forced to move to $S_1$ and then to a vertex $r_2$ in $B(S_1,1)$. Now we need to show that the robber's neighborhood in $B(S_1,2)\setminus S_3$ is small (case 1). Note that (since there is a matching between $B(S_3,2) \cap \mathcal{I}$ and $B(S_1,2)\setminus S_3$) when the robber gets to $r_2$, its neighborhood in $B(S_1,2)\setminus S_3$ is occupied by imaginary cops. 
\vspace{2mm}

\textbf{Claim 1.} If $X=(B(S_1,2)\setminus S_3)\cap B(r_2,1)$ has more than $n^\frac{1}{4}\cdot \log^2{n}$ vertices, then at least one of the imaginary cops in $X$ is real and can capture the robber.

\begin{proof}[Proof of Claim 1]
The number of (real) cops in $X$ is binomially distributed with probability $p'=n^{-\frac{1}{4}}$ and with expectation at least 
$n^\frac{1}{4}\cdot \log^2{n}\cdot n^{-\frac{1}{4}} = \log^2{n}$. By the standard Chernoff-type estimate we know that the probability of having no (real) cop in $X$ is at most $e^{-\log^2{n}/3}$. 
We have at most $n$ choices for $r_2$ and since the sets $\mathcal{C}$ and $\mathcal{I}$ have been selected before, the sets $S_1$ and $S_3$ can be uniquely determined by the position $r$ of the robber. Since we have $n$ different choices for $r$, the statement is true with probability at least 
$$1-n^2\cdot e^{-\log^2{n}/3},$$
which is greater than 0.9 for $n\geq 1085$. This confirms that the claim is true with high probability. In other words, we can select our set $\mathcal{R}$ from vertices in $\mathcal{I}$ in a way that Claim 1 holds.
\end{proof}

By the above claim, if the robber wants to move to $B(S_1,2)\setminus S_3$, then, since it is occupied by imaginary cops, $$|X| < n^\frac{1}{4}\cdot \log^2{n}.$$ 

On the other hand, the robber can decide to move to $S_3$ and then to a vertex $r_4$ in $B(S_3,1)$ where $B(r_4,1)\subseteq B(S_3,2)$ (case 2). Note that the sets $S_1$ and $S_3$ are determined by $r$ and the position of cops. In order to get to $r_4$, the robber needs to move 4 times. Since the diameter of the graph is 4, we have enough time to move our (real) cops to any position in $B(S_3,2)$ that we want. By Corollary \ref{cor:2}, there is a set $\mathcal{R}'$ in $B(S_3,2)$ for our cops such that if $|B(r_4,1)|\geq \log^2{n}\cdot |B(S_3,2)|/|\mathcal{R}'|$, then $|B(r_4,1)\cap \mathcal{R}'|\geq 1$. Therefore, if  $|B(r_4,1) > n^\frac{1}{4}\cdot \log^2{n}$, then there is a real cop there that can catch the robber.

So in both cases, we get to a point where the first neighborhood of the robber is of size at most $n^\frac{1}{4}\cdot \log^2{n}$. 
Repeating the argument of the first step for $r_2$ or $r_4$ (instead of $r$), the new set $S_1$ corresponding to $r_2$ (or $r_4$) is smaller than $n^\frac{5}{8}\cdot \log^4{n}$. Since we have enough time, we can move another set of $n^{\frac{5}{8}+o(1)}$ (real) cops to occupy $B(S_1,2)$ and therefore we can capture the robber. Therefore the cop number of a graph of diameter at most 4 is bounded above by $n^{\frac{5}{8}+o(1)}$.

\subsection{Repeating the strategy}

In the previous section we introduced an approach to catch the robber with $n^{\frac{5}{8}+o(1)}$ cops in a graph with diameter 4. In this section we want to improve this result to $n^{\frac{3}{5}+o(1)}$ and in order to do that let us repeat the same process in a more general manner. For simplicity, we will drop the poly-log factors. Assuming that $n$ is sufficiently large and since the exponent $k$ of any $\log^k{n}$ factor will not depend on $n$, this can be covered by the $o(1)$ term in the exponent of $n$.

Assume that we have $n^{1-\alpha \z}$ cops and they are randomly positioned throughout the graph. Therefore the probability that a vertex contains a cop is $p=n^{-\alpha}$. Using the same argument as above, it is easy to see that $|B(r,1)|<n^{\alpha \z}$ or with high probability we will catch the robber in the first round.

Define $S_1$ and $B(S_1,2)$ as above and similarly conclude that $|B(S_1,2)\cap \mathcal{C}|<|B(r,1)|$ and therefore $|B(S_1,2)|<n^{2\alpha \z}$.

Now consider $n^{1-\gamma_1 \z}$ imaginary cops (each selected with probability $n^{-\gamma_1}$) where each imaginary cop is a real cop with probability $n^{\gamma_1-\alpha}$. Again by using the same strategy, we conclude that $|B(S_3,2)|<n^{2\alpha+\gamma_1 \z}$ and if $2\alpha+\gamma_1\leq 1-\gamma_1$, then we can occupy the whole $B(S_3,2)$ by imaginary cops. Therefore, by selecting $\gamma_1 = \frac{1-2\alpha}{2}$, the robber will face a neighborhood which is occupied by imaginary cops and hence the density of real cops in the robber's neighborhood is $n^{\gamma_1-\alpha \z}$. Update $r$ and the definition of used sets based on the new position of the robber.

Now we can repeat the strategy again to get a better density of cops\index{density}. Assuming that we have $n^{1-\gamma_2 \z}$ (new) imaginary cops and each imaginary cop is a real cop with probability $n^{\gamma_2 - \alpha}$, we will have:
$$|B(r,1)|<n^{\alpha -\gamma_1 \z},\hspace{5mm} |B(S_1,2)|<n^{2\alpha -\gamma_1 \z}, \hspace{5mm} |B(S_3,2)|<n^{2\alpha -\gamma_1+\gamma_2 \z}$$
and if $2\alpha - \gamma_1 + \gamma_2 \leq 1-\gamma_2$, then the new set of imaginary cops can occupy $B(S_3,2)$ to get a better density. We can select $\gamma_2 = \frac{1-2\alpha}{2} + \frac{1-2\alpha}{4}$. We can continue doing this and get 
$$\gamma_i = (1-2\alpha)\sum_{j=1}^i \frac{1}{2^j},$$ therefore $\gamma_i < 1-2\alpha$. When $i$ is large enough, we have $\gamma = 1-2\alpha - o(1)$.

In this part of our strategy we need to capture the robber with real cops. We have $|B(r,1)|<n^{\alpha-\gamma \z}$, $|B(S_1,2)|<n^{2\alpha -\gamma \z}$ and we want $|B(S_1,2)|<n^{1-\alpha \z}$ to be able to occupy $B(S_1,2)$ with real cops. So we have the following two conditions to hold:
\begin{equation}\label{eq:conditions on alpha}
\gamma = 1-2\alpha-o(1) \hspace{2mm} \text{and} \hspace{2mm} 2\alpha-\gamma \leq 1-\alpha.
\end{equation}
Combining these conditions we get $\alpha = \frac{2}{5} - o(1)$ which means that it suffices to have  $n^{\frac{3}{5}+o(1)}$ cops. Therefore, $c(G)< n^{\frac{3}{5}+o(1)}$ when $G$ is a graph of diameter at most 4.

Note that we need to repeat the strategy many times (depending on how close to $n^{\frac{3}{5}}$ we want to get). Although we might need to repeat on and on, we need only seven groups of (real) cops. In particular, every time we only need that the first neighbourhood of $r$ becomes smaller and smaller. So after three steps of the game, the cops used in earlier steps can be released and in at most four steps (because each related cop can reach the intended position in $d(G)\leq 4$ steps) they can take the role of the cops in that later step.

\begin{theorem}
Let $G$ be a graph of diameter at most 4. Then $c(G)<n^{\frac{3}{5}+o(1)}$.
\end{theorem}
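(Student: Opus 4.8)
The plan is to reprise the two–phase ``real and imaginary cops'' strategy developed in Section~\ref{diam4}, but to \emph{iterate} the imaginary–cop phase so as to drive the density of real cops inside the robber's neighborhood as high as the diameter-$4$ budget permits. I would parametrize everything by a single exponent $\alpha$, deploy $n^{1-\alpha+o(1)}$ real cops placed so that the set $\mathcal{C}$ satisfies Corollary~\ref{cor:1} with $p=n^{-\alpha}$, and then optimize $\alpha$ only at the very end, absorbing all poly-logarithmic factors into the $o(1)$ exponent from the outset.

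First I would reduce to a robber whose first neighborhood is small. If ever $|B(r,1)|\geq n^{\alpha}\log^2 n$, then by Corollary~\ref{cor:1} there is a real cop adjacent to $r$ and the robber is caught; so I may assume $|B(r,1)|<n^{\alpha+o(1)}$. Running the Hall's theorem argument of the first step verbatim produces a set $S_1\subseteq B(r,1)$ with $|B(S_1,2)\cap\mathcal{C}|<|B(r,1)|$, and feeding this back into Lemma~\ref{lem:tool} forces $|B(S_1,2)|<n^{\alpha}|B(r,1)|\log^2 n<n^{2\alpha+o(1)}$, while a matching guards $B(r,1)\setminus S_1$; thus the robber is funneled into $S_1$ and then into $B(S_1,2)$. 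The core of the improvement is the iterated imaginary-cop phase: in round $i$ I introduce a fresh batch of $n^{1-\gamma_i+o(1)}$ imaginary cops (density $n^{-\gamma_i}$), each promoted to a real cop independently with probability $n^{\gamma_i-\alpha}$, so the induced real cops again have density $n^{-\alpha}$. Applying the same Hall/Lemma~\ref{lem:tool} argument inside $B(S_1,2)$ yields $S_3$ with $|B(S_3,2)|<n^{\gamma_i}|B(S_1,2)|\log^2 n$, and provided $|B(S_3,2)|\leq n^{1-\gamma_i}$ the imaginary cops can occupy all of $B(S_3,2)$ in time — here the diameter-$4$ hypothesis is exactly what lets cops starting anywhere reach and guard any target in $B(S_3,2)$ before the robber crosses it. After round $i$ the robber faces a neighborhood occupied by imaginary cops of which a fraction $n^{\gamma_i-\alpha}$ are real, which is precisely the statement that $|B(r,1)|$ has effectively shrunk by a factor $n^{\gamma_i}$ for the next round.

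Tracking exponents, the occupation condition at round $i$ reads $2\alpha-\gamma_{i-1}+\gamma_i\leq 1-\gamma_i$ (with $\gamma_0=0$), and solving greedily via $\gamma_i=\tfrac12(1-2\alpha+\gamma_{i-1})$ gives $\gamma_i=(1-2\alpha)\sum_{j=1}^{i}2^{-j}$, so $\gamma_i\uparrow 1-2\alpha$ and after enough rounds $\gamma=1-2\alpha-o(1)$. At that point $|B(S_1,2)|<n^{2\alpha-\gamma+o(1)}$, and to finish with real cops I need $|B(S_1,2)|\leq n^{1-\alpha}$, i.e. $2\alpha-\gamma\leq 1-\alpha$. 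Substituting $\gamma=1-2\alpha$ collapses the two constraints to $5\alpha\leq 2$, so the optimal choice is $\alpha=\tfrac{2}{5}-o(1)$ and the number of real cops is $n^{1-\alpha}=n^{3/5+o(1)}$.

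I expect the main obstacle to be the bookkeeping that turns this counting scheme into an actual strategy: although there are unboundedly many rounds, I must argue (as in the $n^{5/8}$ case) that only a bounded number of independent cop groups are ever active, since once the robber has moved three steps past a guarded region those cops are freed and, the diameter being $4$, can redeploy to the next target within four moves. Equally delicate is the probabilistic selection underlying Claim 1: in each round I must union-bound over the $O(n^2)$ scenarios $(r,r_2)$ or $(r,r_4)$ determined by the robber's position together with the already-fixed sets $\mathcal{C}$ and $\mathcal{I}$, and verify that a random real subset of the imaginary cops (via Corollary~\ref{cor:2}) meets every scenario with probability exceeding $0.9$. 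Controlling the accumulation of the poly-logarithmic factors from Lemma~\ref{lem:tool} across the rounds is exactly why they are folded into the $o(1)$ exponent at the start.
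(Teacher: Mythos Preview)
Your proposal is correct and follows essentially the same route as the paper: the same parametrization by $\alpha$, the same iterated imaginary-cop rounds with the recurrence $\gamma_i=\tfrac12(1-2\alpha+\gamma_{i-1})$ yielding $\gamma_i=(1-2\alpha)\sum_{j\le i}2^{-j}\to 1-2\alpha$, and the same terminal condition $2\alpha-\gamma\le 1-\alpha$ giving $\alpha=\tfrac25-o(1)$. Your remarks on recycling a bounded number of cop groups and on the union bound over $O(n^2)$ robber scenarios also mirror the paper's bookkeeping.
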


\section{Graphs of diameter at most 3}

Let $G$ be a graph of diameter 3. Previous section shows that $c(G)< n^{\frac{3}{5}+o(1)}$. In this section we will improve this bound.

The proof in the previous section gives a strategy involving $n^{1-\alpha}$ cops. Under that strategy we make 2 or 4 steps (depending on the robber's moves), after which the robber has fewer and fewer available neighbors where he can move without being caught. By repeating this sufficiently long time, the set of possible neighbors of the robber, which we simply treat as $B(r,1)$ (where $r$ is the current position) becomes smaller than $n^{\alpha-\gamma \z}$, and this upper bound keeps being valid after every 2 or 4 steps. We may assume this happens at even steps of the game. By using another set of cops, we achieve the same property at odd steps. It is easy to see that in every four consecutive steps we encounter a situation that two consecutive positions, $r$ and $r'$, will have $|B(r,1)|<n^{\alpha-\gamma \z}$ and $|B(r',1)|<n^{\alpha-\gamma \z}$ for every $r'\in B(r,1)$. Having the current position $r$ of the robber and knowing $B(r,1)$, we can take $n^{2\alpha -2\gamma \z}$ cops and, since the diameter of $G$ is 3, we can bring in two steps one cop to a neighbor of each vertex in $B(r',1)$, for each $r'\in B(r,1)$. This means that, whichever position $r'\in B(r,1)$ the robber moves to, its neighborhood will be completely guarded, and he will be caught.

By choosing $\gamma = 1-2\alpha-o(1)$ and $\alpha = \frac37 - o(1)$, the condition $2\alpha -2\gamma \le 1-\alpha$ is satisfied, thus we have enough cops to complete the task in the very last steps of the strategy. 
This gives the following improved bound for graphs of diameter 3.

\begin{theorem}\label{thm:3}
Let $G$ be a graph of diameter at most 3. Then $c(G)<n^{\frac{4}{7}+o(1)}$.
\end{theorem}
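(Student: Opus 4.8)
My plan is to reuse the iterative real-and-imaginary-cops shrinking from Section~\ref{diam4} and to exploit the missing unit of diameter in a single decisive step: guarding the robber's \emph{second} neighbourhood directly. Starting from $n^{1-\alpha}$ cops placed with density $p=n^{-\alpha}$, I would run exactly the recursion of the previous section, defining the sets $S_1$ and $S_3$ through Hall's Theorem and Lemma~\ref{lem:tool} and iterating so that $\gamma_i=(1-2\alpha)\sum_{j=1}^i 2^{-j}\to 1-2\alpha$. After sufficiently many rounds this drives the robber's effective first neighbourhood down to $|B(r,1)|<n^{\alpha-\gamma+o(1)}$ with $\gamma=1-2\alpha-o(1)$, and, as in the diameter-four analysis, this bound is restored after every block of two or four moves.

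The first new ingredient is to make the shrinking bound hold at two \emph{consecutive} positions. Enforcing $|B(r,1)|<n^{\alpha-\gamma+o(1)}$ at even steps with one group of cops and, with an independent group, at odd steps, a parity argument shows that within any four consecutive moves there is a position $r$ for which both $|B(r,1)|<n^{\alpha-\gamma+o(1)}$ and $|B(r',1)|<n^{\alpha-\gamma+o(1)}$ for every $r'\in B(r,1)$. Since every vertex the robber can reach in its next two moves lies in $B(r,2)=\bigcup_{r'\in B(r,1)}B(r',1)$, this yields $|B(r,2)|<n^{2\alpha-2\gamma+o(1)}$.

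The decisive step uses $d(G)\le 3$. Because the diameter is three, from any starting vertex a cop can reach a neighbour of any target vertex in at most two moves (take the second-to-last vertex on a shortest path of length $\le 3$), whereas the robber needs two moves to leave $B(r,1)$ for a fresh vertex of $B(r,2)$. Since the cops move first, I would deploy a fresh group of $n^{2\alpha-2\gamma+o(1)}$ cops and, during these two moves, place one cop on a neighbour of each vertex of $B(r,2)$, thereby guarding the whole set. Whatever $r'\in B(r,1)$ the robber moves to, and wherever in $B(r',1)$ it then steps, its destination lies in the now-guarded $B(r,2)$, so the adjacent cop captures it on the following move. Having $n^{2\alpha-2\gamma+o(1)}$ cops is within budget exactly when $2\alpha-2\gamma\le 1-\alpha$; substituting $\gamma=1-2\alpha$ gives $6\alpha-2\le 1-\alpha$, i.e.\ $\alpha\le\tfrac37$. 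Taking $\alpha=\tfrac37-o(1)$ yields the claimed total of $n^{1-\alpha}=n^{4/7+o(1)}$ cops.

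The part I expect to require the most care is not the arithmetic but the coordination of the several cop groups together with the high-probability guarantees. As in the proof of Claim~1, I must check that a single random choice of each real-cop set works simultaneously for all polynomially many relevant positions $r$ (and their neighbours $r'$), so that a union bound over at most $n^{O(1)}$ scenarios still leaves the success probability bounded away from zero. I must also verify that, since every cop can reach any intended guarding position within $d(G)\le 3$ moves, the groups used in earlier rounds can be released and repurposed, so that only a constant number of cop groups of size $n^{4/7+o(1)}$ is ever needed at once.
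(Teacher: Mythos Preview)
Your proposal is correct and follows essentially the same approach as the paper: iterate the diameter-four shrinking to force $|B(r,1)|<n^{\alpha-\gamma+o(1)}$ with $\gamma=1-2\alpha-o(1)$, run two copies of this on opposite parities so that some two consecutive positions $r,r'$ both have small first neighbourhoods, and then use $d(G)\le 3$ to guard $\bigcup_{r'\in B(r,1)}B(r',1)=B(r,2)$ with $n^{2\alpha-2\gamma+o(1)}$ fresh cops in two moves, yielding $\alpha=\tfrac37$. The paper phrases the final step as guarding each $B(r',1)$ separately rather than $B(r,2)$ as a whole, but this is the same set and the same count, and your remarks on the union bound and on recycling cop groups match the paper's treatment.
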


\section{The general case}

Now let us consider the general case where the diameter of the graph is $d$.
We would like to find an upper bound for the cop number in terms of $n$ and $d$.

\begin{theorem}\label{main}
Let $G$ be a graph of diameter $d$. Then $c(G)\leq n^t$, where $t={1-\frac{2}{2\lceil \log{d} \rceil +1}+o(1)}.$
\end{theorem}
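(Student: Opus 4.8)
The plan is to reproduce the diameter-four argument of Section~\ref{diam4} step by step, letting the diameter dictate the number of ``levels''. Set $k=\lceil\log d\rceil$ and $\alpha=\tfrac{2}{2k+1}$, and place $n^{1-\alpha+o(1)}$ real cops so that each vertex is occupied with probability $p=n^{-\alpha}$ and the set $\mathcal{C}$ enjoys the property of Corollary~\ref{cor:1}. Exactly as in the diameter-four case, if $|B(r,1)|\ge n^{\alpha}\log^2 n$ the robber is caught immediately, so we may assume $|B(r,1)|<n^{\alpha+o(1)}$.

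The core is a chain of $k$ ball-expansions built from Hall's Theorem and Lemma~\ref{lem:tool}, generalizing the passage from $B(r,1)$ to $B(S_1,2)$, but now with \emph{doubling radii} $1,2,4,\dots,2^{k-1}$. At level $j$ (for $1\le j\le k-1$) I would form the bipartite graph whose parts are the current confinement region $B(S_{j-1},2^{j-1})$ and the cops lying within distance $2^{j}$ of it, joining $u$ and $v$ when their $G$-distance is at most $2^{j}$. If a matching saturating the region exists the cops guard it in $2^{j}$ moves and the robber is trapped; otherwise Hall's Theorem yields $S_j\subseteq B(S_{j-1},2^{j-1})$ with $|B(S_j,2^{j})\cap\mathcal C|<|B(S_{j-1},2^{j-1})|$, whence Lemma~\ref{lem:tool} forces $|B(S_j,2^{j})|<n^{\alpha}|B(S_{j-1},2^{j-1})|\cdot\log^2 n$. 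After $k-1$ expansions the final region $B(S_{k-1},2^{k-1})$ therefore has size below $n^{k\alpha+o(1)}$. The doubling is precisely what keeps the number of levels at $\lceil\log d\rceil$ rather than at $d$: the robber, forced successively into $S_1,\dots,S_{k-1}$, needs on the order of $2^{0}+\cdots+2^{k-1}=2^{k}-1$ steps to reach the outer boundary of the final region, and since $d\le 2^{k}$ this is (up to the usual cops-move-first offset) at least the number of moves a cop needs to guard a radius-$2^{k-1}$ ball in a graph of diameter $d$.

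On top of this chain I would run the density-reduction loop of the ``Repeating the strategy'' subsection, with the radius-$2^{k-1}$ region now playing the role that $B(S_1,2)$ played when $d=4$. Introducing $n^{1-\gamma_i+o(1)}$ imaginary cops (density $n^{-\gamma_i}$), each real with probability $n^{\gamma_i-\alpha}$, one additional Hall step lets the imaginary cops occupy a region of size $n^{k\alpha-\gamma_{i-1}+\gamma_i}$ provided $k\alpha-\gamma_{i-1}+\gamma_i\le 1-\gamma_i$; this is the recursion $\gamma_i=\tfrac12(1-k\alpha+\gamma_{i-1})$, whose fixed point is $\gamma=1-k\alpha$. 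After each iteration the robber's usable neighbourhood shrinks, measured by the density $n^{\gamma_i-\alpha}$ of real cops inside it, so in the limit $|B(r,1)|<n^{\alpha-\gamma+o(1)}$ and the final region has size $n^{k\alpha-\gamma+o(1)}$. Occupying it with the $n^{1-\alpha+o(1)}$ real cops requires $k\alpha-\gamma\le 1-\alpha$; substituting $\gamma=1-k\alpha$ gives $(2k+1)\alpha\le 2$, that is, the claimed $\alpha=\tfrac{2}{2k+1}$ and hence $c(G)\le n^{1-\frac{2}{2k+1}+o(1)}$.

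The delicate part is not the exponent bookkeeping but the \emph{timing}: one must verify that at every level, and in the final capture, the real or imaginary cops have at most $d$ available moves to guard a ball of radius up to $2^{k-1}$ before the confined robber can cross its boundary. This is exactly where $k=\lceil\log d\rceil$ is forced, through $2^{k-1}<d\le 2^{k}$; the Chernoff and union-bound verifications (as in the proof of Claim~1 and in Corollary~\ref{cor:2}) that some real cop sits in each of the polynomially many candidate robber neighbourhoods carry over unchanged. A secondary point, already noted for $d=4$, is that although the reduction loop is repeated unboundedly often, only finitely many groups of real cops (a number depending on $k$ but not on $n$) are ever required, since after a constant number of rounds the cops used earlier can be released and, within $d$ moves, redeployed in a later round.
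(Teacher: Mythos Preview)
Your proposal is correct and follows essentially the same route as the paper's own proof: a chain of Hall--Lemma~\ref{lem:tool} expansions with doubling radii up to $2^{\lceil\log d\rceil-1}$, followed by the iterated imaginary-cop density reduction whose fixed point gives $\gamma=1-k\alpha$ and hence $\alpha=\tfrac{2}{2k+1}$. The only difference is indexing---you take $k=\lceil\log d\rceil$ where the paper takes $k=\lceil\log d\rceil-1$ and writes the final exponent as $\tfrac{2}{2k+3}$---and your labelling of the obstruction sets as $S_1,\dots,S_{k-1}$ versus the paper's $S_1,S_3,\dots,S_{2^k-1}$; the arithmetic, the timing check $d\le 2^{\lceil\log d\rceil}$, and the observation that only boundedly many groups of real cops are needed all agree.
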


\begin{proof}
Assume that we have $n^{1-\alpha \z}$ cops randomly positioned throughout vertices of $G$. Using a similar argument as in Section \ref{diam4} we will get:
$$|B(r,1)|<n^{\alpha \z}, \hspace{2mm} \ldots ,\hspace{2mm} |B(S_{2^k-1},2^k)|<n^{(k+1)\alpha \z} \hspace{4mm} \text{for} \hspace{2mm} k=1, \ldots ,\lceil \log{d} \rceil-1.$$
 Now assume that we have a set $I$ of $n^{1-\gamma \z}$ imaginary cops where each of them is a real cop with probability $n^{\gamma-\alpha}$. 

Let $k=\lceil \log{d} \rceil-1$. If there is a matching between vertices in $B(S_{2^k-1},2^k)$ and the imaginary cops in $B(S_{2^k-1},2\cdot 2^k)$ then we can occupy $B(S_{2^k-1},2^k)$ with imaginary cops in $2^k$ moves (here we consider the matching with respect to the bipartite graph whose edges correspond to paths of length $\leq 2^k$ in $G$). Otherwise, using the previous argument and Hall's Theorem, there is a set $S_{2^{k+1}-1} \subseteq B(S_{2^k-1},2^{k})$ such that $|S_{2^{k+1}-1}| > |B(S_{2^{k+1}-1},2^{k}) \cap I|$. 
Thus $$|B(S_{2^{k+1}-1},2^{k}) \cap I| < |B(S_{2^k-1},2^k)|,$$ and therefore by using Lemma \ref{lem:tool}, $|B(S_{2^{k+1}-1},2^{k})| < n^{\gamma \z} |B(S_{2^k-1},2^k)| < n^{(k+1)\alpha+\gamma \z}$. Note that we have $n^{1-\gamma \z}$ (imaginary) cops and we consider the set $B(S_{2^{k+1}-1},2^{k})$ after (at least) $d$ steps of the robber. If $1-\gamma>(k+1)\alpha+\gamma$, then we can get our imaginary cops to occupy all vertices in $B(S_{2^{k+1}-1},2^{k})$ at the considered time. Therefore after $2^k$ or $2^{k+1}$ moves, the robber's entire neighborhood will be occupied by imaginary cops and the density of real cops will improve from $n^{-\alpha}$ to $n^{\gamma-\alpha}$. Thus, for the position of the robber at that time, say $r$, with high probability, we have:
$$|B(r,1)|<n^{\alpha-\gamma \z}, \hspace{2mm} |B(S_1,2)|<n^{2\alpha-\gamma \z}, \hspace{2mm} \ldots ,\hspace{2mm} |B(S_{2^k-1},2^k)|<n^{(k+1)\alpha-\gamma \z}.$$

Repeating the argument with a new set of $n^{1-\beta \z}$ imaginary cops can improve the density of real cops in the first neighborhood of the robber and we can get
$$|B(S_{2^{k+1}-1},2^{k})| < n^{(k+1)\alpha-\gamma+\beta \z}.$$
And if $(k+1)\alpha-\gamma+\beta<1-\beta$, then the new imaginary cops can improve the density. We can continue this until $\gamma=\beta$ which means that $\gamma=\beta<1-(k+1)\alpha$.

Now after having the desired density of real cops in the first neighborhood of the robber (i.e. $\gamma = \beta$), we would like to capture the robber. If $(k+1)\alpha-\gamma<1-\alpha$, then we can use another set of real cops to guard the set $B(S_{2^k-1},2^k)$ in at most $d-1\leq 2^{k+1}-1$ steps. Note that we need to guard $B(S_{2^k-1},2^k)$ when the robber enters it after $2^{k+1}-1$ moves ($2^k+2^{k-1}+\cdots +2+1$ moves) which gives the cops the required time to guard these vertices and capture the robber.

Now combining the conditions $(k+1)\alpha-\gamma<1-\alpha \hspace{3mm} \text{and} \hspace{3mm} \gamma<1-(k+1)\alpha$, we see that
$\alpha<\frac{2}{2k+3}$ works. Therefore, $c(G)\leq O(n^{1-\frac{2}{2k+3}+o(1)})$, where $k=\lceil \log{d} \rceil-1$ and $d$ is the diameter of $G$.
\end{proof}

\section{Graphs of large girth}

In this section we will improve the strategy of cops and decrease the derived bound on the cop number for graphs of large girth. Define $B'(A,i)=B(A,i)\setminus B(A,i-1)$. 

Let $G$ be a graph of girth $g$ and let $\rho=\lfloor \frac{g+1}{4} \rfloor$. The following lemma is our main tool.

\begin{lemma}\label{lem:guard}
For every vertex $u$, if the robber is not in $B(u,2\rho-2)$, then two cops can prevent the robber from entering $B(u,\rho)$.
\end{lemma}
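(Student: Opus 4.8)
The plan is to exploit the fact that a graph of girth $g$ behaves like a tree in any region of radius comparable to $g/4$, and then to have the two cops \emph{shadow} the robber's projection onto the ball $B(u,\rho)$, so that the only vertices through which the robber could enter $B(u,\rho)$ are permanently occupied.

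First I would record the metric structure forced by the girth. Since $\rho=\lfloor\frac{g+1}{4}\rfloor$ we have $g\ge 4\rho-1$, so no cycle has length at most $4\rho-2$. This yields two facts: (i) for every vertex $x$ with $d(u,x)\le 2\rho-1$ the shortest $x$–$u$ path is unique, since a second one would close a cycle of length at most $2\,d(u,x)\le 4\rho-2<g$; and (ii) the subgraph induced on $B(u,\rho)$ is a tree rooted at $u$, since a cycle inside it would have length at most $2\rho<g$. I would then define, for each $x$ with $d(u,x)\le 2\rho-1$, its \emph{gate} $\pi(x)$ as the vertex where the unique $x$–$u$ geodesic meets $B'(u,\rho)$ (and $\pi(x)=x$ for $x\in B(u,\rho)$). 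The same short-cycle bound shows $\pi(x)$ is the \emph{unique} nearest vertex of $B(u,\rho)$ to $x$, that $\pi$ is $1$-Lipschitz on this region (projection onto a gated set), and, crucially, that any move carrying the robber from outside $B(u,\rho)$ into it must land exactly on the current gate $\pi(r)$: a robber at depth $\rho+1$ has, by uniqueness of its geodesic, a single depth-$\rho$ neighbour, and that neighbour is $\pi(r)$.

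Next I would set up the two-cop invariant. Writing $\pi(r)^-$ for the parent of $\pi(r)$ in the tree $B(u,\rho)$ (the vertex at distance $\rho-1$ from $u$ on the geodesic), the cops maintain, whenever $d(u,r)\le 2\rho-1$: cop $C_1$ on $\pi(r)$ and cop $C_2$ on $\pi(r)^-$. Since the cops move first, $C_1$ can always step onto the gate of the robber's \emph{current} position, and by the observation above this already forbids every entry move, as the robber would walk onto $C_1$. The role of $C_2$ is to cover a change of branch: because $B(u,\rho)$ induces a tree, two distinct vertices of $B'(u,\rho)$ both lie at depth $\rho$ and are therefore non-adjacent, so the robber can shift its gate to a sibling leaf only by first retreating to depth at least $\rho+2$ and descending another subtree; during that detour $C_2$, sitting at the branch vertex, follows the gate down the new branch and restores the invariant before the robber can reach $B'(u,\rho)$ again.

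Finally I would treat acquisition and repeated excursions, which I expect to be the main obstacle. While $d(u,r)\ge 2\rho$ the gate is not single-valued and cannot be tracked; here the hypothesis that the robber starts outside $B(u,2\rho-2)$, i.e.\ at distance at least $2\rho-1$, is what I would use, since it guarantees that whenever the robber (re-)enters the annulus it is still at distance at least $\rho-1$ from $B'(u,\rho)$, giving the cops time to descend from the root and install the invariant before any entry is possible. The delicate point is a one-tempo deficit: a lone cop descending from $u$ reaches the gate exactly one step too late, which is precisely why a second cop, kept one level nearer the root, is needed to absorb the cops' loss of initiative at the moment of lock-on. I would verify this timing carefully, confirm that the invariant is re-established after every excursion of the robber to distance at least $2\rho$, and check that $\rho=\lfloor\frac{g+1}{4}\rfloor$ is exactly the value keeping $\pi$ single-valued out to radius $2\rho-1$, so that the buffer of the hypothesis and the tree structure are simultaneously available.
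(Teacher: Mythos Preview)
Your projection $\pi$ is not $1$-Lipschitz, and $B(u,\rho)$ is not a gated set out to radius $2\rho-1$. Take $g=4\rho-1$ (the minimum allowed) and let $G$ be the cycle $v_0v_1\cdots v_{4\rho-2}v_0$ with $u=v_0$. The vertices $v_{2\rho-1}$ and $v_{2\rho}$ are adjacent and both lie at depth $2\rho-1$, yet their gates are $\pi(v_{2\rho-1})=v_\rho$ and $\pi(v_{2\rho})=v_{3\rho-1}$, which are at distance $2\rho-1$ from one another. A single sideways step of the robber therefore moves $\pi(r)$ by $2\rho-1$, and your invariant ``$C_1$ on $\pi(r)$, $C_2$ on $\pi(r)^{-}$'' collapses: both cops are committed to one branch at depths $\rho$ and $\rho-1$, and neither can reach the new gate before the robber (now one step from it) does. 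Your sentence ``the robber can shift its gate to a sibling leaf only by first retreating to depth at least $\rho+2$'' is precisely what fails --- at depth $2\rho-1$ the gate jumps with no retreat at all. The same obstruction wrecks the acquisition phase: the one-tempo deficit you identify cannot be absorbed by a cop parked at $\pi(r)^{-}$, because that cop is on the wrong branch after a switch.

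The paper's proof avoids this by using a much looser invariant: when the robber is at depth $2\rho-k$, the \emph{active} cop is only at depth $k$ on the robber's $u$-geodesic, and the second cop waits at $u$. The active cop simply mirrors the robber's radial motion. Your own short-cycle count shows a gate change can occur only when the robber is at depth $2\rho-1$, i.e.\ $k=1$; at that moment the idle cop at $u$ steps one edge toward the new gate while the old active cop (also at depth~$1$) returns to $u$, so the roles swap and the invariant is restored in a single move. Keeping the reserve cop at the centre rather than one level below the gate is exactly what makes the abrupt gate jump harmless.
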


\begin{proof}
For $\rho=1$, the robber is not in $B(u,0)$ and a cop, by staying at $u$ can prevent the robber from entering $B(u,1)$. So we may assume that $\rho\geq 2$ and therefore $g\geq 7$. 
Now let us assume that cops $C_1$ and $C_2$ are at $u$ and the robber has entered $B'(u,2\rho-1)$ and it is the cops' turn. Because of the girth condition, there is a unique vertex $x$ in $B'(u,\rho)$ that the robber can enter in $\rho-1$ moves. Therefore $C_1$ will move one step towards that vertex to be able to guard it in $\rho -1$ moves. From now on, $C_1$ will copy the movements of the robber: if the robber moves closer to $x$, then $C_1$ will moves closer to $x$ as well and if the robber moves away from $x$, then $C_1$ will move back towards $u$.
Note that when the robber is in $B'(u,2\rho-1)$, the entrance vertex in $B'(u,\rho)$ can be changed in one move. In this case, the other cop, $C_2$, will move one step towards the robber and $C_1$ will get back to $u$. So if the robber is in $B'(u,2\rho-k)$, then there is one cop in $B'(u,k)$ and one in $u$.
Therefore, by using this strategy, the two cops can prevent the robber from entering $B'(u,\rho)$.
\end{proof}

Now we have the tool to improve the result from the previous section.

\begin{theorem}\label{high_girth}
Let $G$ be a graph of diameter $d$ and girth $g$ and let $\rho=\lfloor \frac{g+1}{4} \rfloor$. Then $c(G)\leq n^t$, where $t={1-\frac{2}{2\lceil\log{(d/\rho)}\rceil+1}+o(1)}.$
\end{theorem}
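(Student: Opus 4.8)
The plan is to run the strategy of Theorem \ref{main} at \emph{scale} $\rho$. By Lemma \ref{lem:guard} a single \emph{pair} of cops can seal off an entire ball $B(u,\rho)$ against a robber who is still outside $B(u,2\rho-2)$, so each such sealed ball behaves like an atomic ``super-vertex''; this coarsens $G$ to a graph of effective diameter $d/\rho$, and the two-cop overhead per ball is a constant that disappears into the $o(1)$ exponent. Under this reduction the recursion that in Theorem \ref{main} had to double the guarded radius all the way from $1$ up to $\approx d$ (costing $\lceil\log d\rceil$ levels) now only has to climb from $\rho$ up to $\approx d$, costing $\lceil\log(d/\rho)\rceil$ levels; substituting $k=\lceil\log(d/\rho)\rceil-1$ into the computation of Theorem \ref{main} then produces the claimed exponent.

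Concretely I would proceed as follows. Place $n^{1-\alpha}$ real cops on a set $\mathcal C$ satisfying Corollary \ref{cor:1}, and as usual assume $|B(r,1)|<n^{\alpha}\log^{2}n$, since otherwise a cop catches the robber immediately. Then set up the $\rho$-scaled doubling: for each level $j$ form the bipartite graph joining $B(S_{2^{j}-1},2^{j}\rho)$ to the cops lying in $B(S_{2^{j}-1},2^{j+1}\rho)$, with edges corresponding to paths of length at most $2^{j}\rho$, and apply Hall's Theorem together with Lemma \ref{lem:tool} exactly as in Theorem \ref{main} to obtain
\begin{equation*}
|B(S_{2^{j}-1},2^{j}\rho)| < n^{(j+1)\alpha}\log^{O(1)}n,\qquad j=1,\dots,k,
\end{equation*}
where $k=\lceil\log(d/\rho)\rceil-1$ so that $2^{k}\rho\approx d$. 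Lemma \ref{lem:guard} is what terminates this recursion at the $\rho$-scale rather than at radius $1$: whenever the robber's only exit from a confined region runs through some ball $B(u,\rho)$ and he is still outside $B(u,2\rho-2)$, a fresh pair of cops is dispatched to $u$ to seal it, so the cops never need to resolve anything finer than a $\rho$-ball and the bottom $\log\rho$ levels simply never occur.

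The density-amplification phase is carried over verbatim. Using successive fresh batches of $n^{1-\beta}$ imaginary cops, each promoted to a real cop with probability $n^{\beta-\alpha}$, and invoking Corollary \ref{cor:2}, I would drive the density of real cops inside the robber's current $\rho$-ball upward until $\gamma=\beta<1-(k+1)\alpha$; a final batch of real cops then guards $B(S_{2^{k}-1},2^{k}\rho)$ provided $(k+1)\alpha-\gamma<1-\alpha$. Combining these two inequalities gives $\alpha<\tfrac{2}{2k+3}=\tfrac{2}{2\lceil\log(d/\rho)\rceil+1}$, which is exactly the bound $c(G)\le n^{t}$ asserted.

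The step I expect to be the main obstacle is checking the hypothesis of Lemma \ref{lem:guard} each time a pair of cops is committed: I must be sure that the robber really is outside $B(u,2\rho-2)$ and that the two cops, who move first, can reach $u$ within the rounds available before the robber closes to distance $2\rho-2$. This is the $\rho$-scale analogue of the ``$2^{k+1}-1$ moves'' timing bookkeeping of Theorem \ref{main}, and it is precisely where the heuristic ``effective diameter $d/\rho$'' must be turned into a rigorous schedule; one must also confirm that the factor of two per sealed ball, together with the $\log^{O(1)}n$ factors accumulated over the $O(\log d)$ iterations, stays inside the $o(1)$ term of the exponent.
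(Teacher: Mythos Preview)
Your approach is essentially the paper's, but one detail is off and you have correctly flagged the other as unresolved.

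The base case of the $\rho$-scaled doubling should be $|B(r,\rho)|<n^{\alpha+o(1)}$, not $|B(r,1)|<n^{\alpha}\log^{2}n$. Starting only from $|B(r,1)|$ you cannot reach $|B(S_{1},2\rho)|<n^{2\alpha}$ by a single Hall/Lemma~\ref{lem:tool} step; the whole point is that Lemma~\ref{lem:guard} lets a cop-pair stationed at $u$ guard all of $B(u,\rho)$, so a pair sitting anywhere in $B(r,\rho)$ already dominates $r$, and the standard density argument (now placing pairs rather than single cops on $\mathcal C$) yields $|B(r,\rho)|<n^{\alpha+o(1)}$ directly. From there the doubling runs exactly as you wrote for $j\ge 1$, with matchings whose edges are paths of length at most $2^{j}\rho$ and whose matched cop-pair controls a full $\rho$-ball at the far end. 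In other words, Lemma~\ref{lem:guard} is not a device that ``truncates the bottom $\log\rho$ levels'' of an otherwise unchanged recursion; it is invoked at \emph{every} level, including the base, to let each matched pair control a $\rho$-ball rather than a single vertex --- that is what shifts the starting scale from $1$ to $\rho$.

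For the obstacle you single out (the robber may already be inside $B(u,2\rho-2)$ when the pair is committed at $u$), the paper's fix is short: since $g\ge 4\rho-1$, the induced subgraph on $B(u,2\rho-1)$ is a tree rooted at $u$, so one additional cop simply chases the robber along this tree, forcing him either toward $u$ (and capture) or out into $B'(u,2\rho-1)$, at which point the hypothesis of Lemma~\ref{lem:guard} is met and the two stationed cops take over. This extra chasing cop is a constant overhead and is absorbed into the $o(1)$ in the exponent, just as the factor of two per ball is.
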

\begin{proof}
We will use the same strategy as in the previous section with the exception that instead of having each cop guarding a vertex and its neighbourhood, we replace each of them with two cops at each such vertex $u$ and will make them guard $B(u,\rho)$, as shown in Lemma \ref{lem:guard}.

In the first step of our previous strategy, we used the probability of having a cop in the first neighborhood of the position of the robber to bound the size of the first neighborhood of the position of the robber. To get a better result, assume that we have  (two sets of) $n^{1-\alpha \z}$ cops and place two cops (instead of one) randomly on each vertex with probability $p$. 

Let us first assume that two cops are at $u$ but the robber is already in $B(u,2\rho-2)$. To start our strategy in Lemma \ref{lem:guard}, we need to push the robber out of $B(u,2\rho-2)$. When the robber is in $B(u,2\rho-1)$, there is a unique shortest path from $u$ to the robber's position and there is no cycle in $B(u,2\rho-2)$. Sending an extra cop to follow the robber will force the robber to move either towards $u$ and eventually get captured or to get out of $B(u,2\rho-2)$ (and enter $B'(u,2\rho-1)$).

Let $r$ be the position of the robber. After some steps, if there is a vertex in $B(r,\rho)$ that was selected to contain (two) cops, then it means that $r\in B(u,\rho)$ where $u$ contains two cops. By Lemma \ref{lem:guard}, the robber should have been captured by now. So by Lemma \ref{lem:tool} (and ignoring the $\log{n}$ term) we may assume that $|B(r,\rho)|<n^{\alpha \z}$.

In the next step (in the previous approach) we defined the set $S_1$ to be (roughly) the set of vertices that cannot be guarded by the cops in $B(r,3)$. We can redefine $S_1$ to be the set of vertices in $B(r,\rho)$ that cannot be guarded by the cops in $B(r,2\rho)$. Note that although the cops are moving first, we cannot bring cops from $B(r,3\rho)$ to cover $S_1$ (since $\rho$ can be more than 1). As usual, we can see that not only $|S_1|>|B(S_1,\rho)\cap \mathcal{C}|$, but also with high probability we have $|B(S_1,\rho)|< n^{2\alpha \z}$. Note that in the next step we can define $S_3$ in $B(S_1,\rho)$ and calculate the upper bound for $|B(S_3,2\rho)|$. The radius of the ball around $S_i$ will grow exponentially and we have
$$|B(S_{2^k+1},2^k\rho)|<n^{(k+1)\alpha \z} \quad \text{for} \quad k=0,\ldots, \lceil \log \frac{d}{\rho} \rceil.$$

Now we can follow the previous strategy to get $\alpha<\frac{2}{2k+3}$ and therefore (by replacing $k$ with $\lceil \log \frac{d}{\rho} \rceil-1$) the cop number is at most
$n^t$, where $t={1-\frac{2}{2\lceil\log{(d/\rho)}\rceil+1}+o(1)}$.
\end{proof}

\section{Digraphs of bounded diameter}

In this section we will consider digraphs of diameter two and bipartite digraphs of diameter three, which are the digraphs such that between any two vertices $u$ and $v$ there is a directed path of length at most two (for the first case) or three (for the second case). Note that such digraphs will automatically be strongly connected.

We will basically generalize the method that was introduced in \cite{diameter2} to digraphs.

\begin{lemma} \label{lem:diam2}
Let $k >0$ be an integer, $D$ be a digraph of diameter 2 or a bipartite digraph of diameter 3, and let $H$ be a sub-digraph of $D$, such that the maximum out-degree of $H$ is at most $k$. Suppose the robber is restricted to move on the edges of $H$, while the cops can move on $D$ as usual. Then $k+1$ cops can catch the robber.
\end{lemma}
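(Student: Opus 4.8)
The plan is to reduce the entire game to reaching a single \emph{guarding} configuration, after which capture is immediate. For the robber at a vertex $r$, let $S(r)=\{r\}\cup N_H^+(r)$ be the set of vertices it can occupy after its next move; since $H$ has maximum out-degree at most $k$, we have $|S(r)|\le k+1$. Say that a cop \emph{guards} a vertex $v$ if it currently sits on $v$ or on an in-neighbour of $v$ (so that it can step onto $v$ in a single move). I would aim to maintain the invariant (J): immediately after the cops' move, the $k+1$ cops guard all of $S(r)$, one cop assigned to each vertex of $S(r)$.

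The key observation is that invariant (J) forces a capture on the very next move. Indeed, suppose (J) holds with the robber at $r$, and the robber moves to some $r'\in S(r)$. The cop assigned to $r'$ was sitting either on $r'$ itself, in which case the robber would already be caught (it cannot move onto an occupied vertex without being captured), or on an in-neighbour $w$ of $r'$, from which it now steps $w\to r'$ and captures the robber on the cops' turn; if the robber instead stays at $r$, the cop guarding $r$ captures it in the same way. Hence the whole argument comes down to \emph{achieving} (J) once, from an arbitrary starting position.

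To achieve (J) I would use the diameter bound. In a digraph of diameter two a cop can travel from any vertex to any other in at most two moves, so the $k+1$ cops can in principle be routed, each to a guard position of its assigned target in $S(r)$, within two cops' moves. The main obstacle is that the robber also moves during this setup, so the target set $S(r)$ keeps shifting; this is exactly where the out-degree bound is needed. Because every exit from $r$ lies in $N_H^+(r)$, and each such exit becomes guarded as soon as one cop reaches an in-neighbour of it, the robber's set of safely reachable vertices can only shrink as the setup proceeds, while the diameter-two shortcuts let the cops converge at least as fast as the robber can travel along $H$. I would formalize this by designating one ``sentinel'' cop to keep the robber from leaving the neighbourhood currently being sealed off, while the remaining $k$ cops occupy the out-neighbours one by one, so that after finitely many rounds (J) is attained. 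I expect this step, namely turning the bare fact ``a cop can reach anything in two moves'' into a guarantee that (J) is reached despite the moving target, to be the crux of the argument.

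Finally, the bipartite diameter-three case is handled by the same invariant (J) and the same ``(J) $\Rightarrow$ capture'' reduction; the only change is that reaching a prescribed guard position now takes at most three cops' moves instead of two. Bipartiteness rules out a parity obstruction to these length-at-most-three routes, since an in-neighbour of a target $v$ lies in the part opposite to $v$, which is the same part as $r$; so the identical sealing-off argument, again using the sentinel cop, drives the cops into configuration (J) and hence to a capture with $k+1$ cops.
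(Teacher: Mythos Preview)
Your invariant (J) and the observation that (J) forces immediate capture are exactly right, and match the paper. The gap is in how you propose to \emph{reach} (J): you overestimate the difficulty and then leave the ``crux'' unproved.

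In the diameter-$2$ case there is no moving-target problem at all, because (J) is achieved in a \emph{single} cop move. From any vertex $u$ there is a directed path of length at most $2$ to each target $v_i\in N_H^+(r)$; hence after one step the cop assigned to $v_i$ is at distance at most $1$ from $v_i$, i.e.\ already at a guard position for $v_i$. Since all cops move simultaneously and the robber has not yet moved in this round, after this one cop move every vertex of $S(r)$ is guarded. The robber is now frozen, and the spare $(k+1)$st cop walks to $r$. Your ``within two cops' moves'' is the slip: two moves is what it takes to \emph{occupy} $v_i$, but guarding only requires reaching an in-neighbour, which takes one. Consequently no sentinel and no iterative sealing-off are needed.

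For the bipartite diameter-$3$ case you cannot simply say ``the same argument with three moves instead of two''; that reintroduces a genuine moving-target issue. The paper's fix is a parity preprocessing step: first place $k$ cops anywhere in one part $R$, and use the extra cop to chase the robber until it sits at some $r\in L$. Then every $v_i\in N_H^+(r)$ lies in $R$, and since $D$ is bipartite of diameter $3$, the directed distance from each cop (in $R$) to its target $v_i$ (also in $R$) is even, hence at most $2$. Now the diameter-$2$ argument applies verbatim: one cop move guards all of $N_H^+(r)$, the robber is frozen, and the chasing cop captures it. Your parity remark was pointing in this direction, but the essential idea---pre-position the cops in the correct part so that all relevant distances drop to at most $2$---is missing from your outline.
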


\begin{proof}
 Let $r$ be the position of the robber, a vertex of out-degree $l\leq k$ with out-neighbors $v_1, \ldots, v_l$ and let $c_1, \ldots, c_{k+1}$ be our set of cops. 
 
 Let us first assume that $D$ is a digraph of diameter 2. Assign $c_i$ to cover $v_i$ (for $i=1,\ldots, l$). Since the diameter of $D$ is 2 (the diameter of $H$ can be different), each $c_i$ can get to $v_i$ in at most two moves and therefore in one move can get to an in-neighbor of it. Thus, in one move, the cops can position themselves in a way that the robber cannot use any of its out-neighbors. So the robber cannot move. Now send another cop (we have at least one cop more than the number of out-neighbors) to capture the robber.

Now let $D$ be a bipartite digraph of diameter 3 and let $V(D)=L\cup R$ be the bipartition of vertices of $D$. Move $k$ cops to $R$ and let the additional cop follow the robber and force the robber to move. Consider the position  $r$ of the robber when $r\in L$. Now the out-neighbors of $r$, $v_1, \ldots, v_l$ are in $R$. Assign $c_i$ to control $v_i$. Since the diameter of $D$ is 3 and $D$ is bipartite, there is a directed path of length at most 2 between the position of $c_i$ and $v_i$. So each $c_i$ by moving  once towards $v_i$ can guard it. Therefore the robber cannot use any of $v_i$'s (without being caught) and the cop who is following the robber will catch the robber. 
\end{proof}

For a pair $(D,H)$ where $H\subseteq D$, we define $c(D,H)$ to be the minimum number of cops that can capture the robber, when the robber is forced to move on  $H$ while the cops can move on $D$.

\begin{theorem}\label{thm:dir}
Let $D$ be a digraph of diameter 2, or a bipartite digraph of diameter 3, of order $n$. Then
$$c(D)\leq\sqrt{2n}.$$
\end{theorem}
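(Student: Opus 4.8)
The plan is to reduce the free-roaming game on $D$ to the restricted game already handled by Lemma~\ref{lem:diam2}, choosing a degree threshold so that the two resulting groups of cops balance to $\sqrt{2n}$. Write $N^+(u)$ for the out-neighbourhood of $u$ and $d^+(u)=|N^+(u)|$. The first structural fact I would record is that the diameter hypothesis makes guarding cheap: since every vertex is reachable from $u$ by a directed path of length at most two, the set $\{u\}\cup N^+(u)$ is \emph{out-dominating}, i.e.\ every vertex of $D$ either lies in it or has an in-neighbour in it. Consequently, placing cops on such a set and then using exactly the movement argument of Lemma~\ref{lem:diam2} (one cop steps onto an in-neighbour of the robber's position, with the bipartite diameter-$3$ case first herding the cops onto the correct side of the bipartition), the cops occupy the robber's position after a single move. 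This already disposes of the case in which some vertex has small out-degree.

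Concretely, set $k=\lceil\sqrt{n/2}\,\rceil$ and split on the minimum out-degree $\delta^+$ of $D$. If $\delta^+\le k$, take $u$ of minimum out-degree; then $\{u\}\cup N^+(u)$ is an out-dominating set of size at most $k+1$, and one cop per vertex of it catches the robber essentially at once, comfortably within the budget $\sqrt{2n}$. The genuinely hard case is $\delta^+>k$, where every vertex has large out-degree and the game on $D$ itself is far beyond the reach of Lemma~\ref{lem:diam2}. Here the plan is to restrict the robber to a spanning sub-digraph $H\subseteq D$ of maximum out-degree at most $k-1$ (keeping only $k-1$ out-edges per vertex), while reserving a second brigade of at most $k$ cops whose sole role is to neutralise the pruned out-edges, so that in effect the robber is forced to play on $H$. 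Lemma~\ref{lem:diam2} then catches a robber confined to $H$ with $k$ cops, and the two brigades total at most $2k$; since $2k=\sqrt{2n}+O(1)$, I would obtain the stated bound by optimising the split between the threshold and the number of reserved cops (the symmetric choice $m=k=\sqrt{n/2}$ giving $m+k=\sqrt{2n}$).

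The step I expect to be the main obstacle is enforcing confinement to $H$ in the dense case with only $O(\sqrt n)$ cops. A diameter-two digraph can have $\Theta(n^2)$ edges and many vertices of large out-degree, so one cannot simply station a cop on each pruned vertex. The resolution must again exploit the diameter hypothesis: the out-neighbourhoods of high-out-degree vertices are large and heavily overlapping, so a \emph{small} out-dominating set covers all pruned targets simultaneously, and the reserved cops, moving as in Lemma~\ref{lem:diam2}, can block every attempt by the robber to traverse a deleted edge. Verifying that such a blocking set has size at most $\sqrt{n/2}$, and that the two cop strategies run concurrently without conflict (in particular that the bipartite diameter-$3$ herding of both brigades onto the correct side is compatible), is the crux; the clean constant $\sqrt{2n}$ then falls out of balancing the threshold $k$ against the size of the blocking set.
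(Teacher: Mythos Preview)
Your proposal is not a complete proof: you yourself flag Case~2 ($\delta^+(D)>k$) as ``the crux'' and never verify it, and the mechanism you sketch there does not work. To confine the robber to a spanning $H$ of maximum out-degree $k-1$, the blocking brigade must, after its move, \emph{occupy} every pruned out-neighbour of the robber's current position; with only $k$ cops this is impossible once $d^+(r)\ge 2k$, which in Case~2 is the generic situation. Your fallback heuristic --- that the diameter hypothesis forces a small out-dominating set covering all pruned targets --- is self-defeating: if $D$ had an out-dominating set $S$ with $|S|\le\sqrt{n/2}$, then cops stationed on $S$ would catch the robber in a single move and the whole two-brigade scheme would be superfluous. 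Nothing in ``$\delta^+>k$ and diameter $2$'' by itself yields such a set, so you are assuming away exactly the hard instances.

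The paper's argument sidesteps this dichotomy entirely. It proves the stronger assertion $c(D,H)\le\sqrt{2m}$ for every sub-territory $H$ on $m$ vertices, by induction on $m$. If $\Delta^+(H)<\lfloor\sqrt{2m}\rfloor$, Lemma~\ref{lem:diam2} finishes with $\lfloor\sqrt{2m}\rfloor$ cops. Otherwise one picks a single vertex $v$ of out-degree at least $\lfloor\sqrt{2m}\rfloor$ in $H$ and parks one \emph{stationary} cop there; this cop permanently bars the robber from $\{v\}\cup N^+_H(v)$, so the robber is now confined to $H'=H\setminus(\{v\}\cup N^+_H(v))$ with $|V(H')|\le m-\lfloor\sqrt{2m}\rfloor-1$, and induction gives $1+\sqrt{2|V(H')|}\le\sqrt{2m}$. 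High out-degree is thus not an obstacle but the \emph{engine}: each stationary cop deletes at least $\sqrt{2m}$ vertices from the robber's territory, and no global blocking set is ever required.
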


\begin{proof}
By the definition we have that $c(D,D)=c(D)$. We will prove that $c(D,H)\leq \sqrt{2m}$, $m=|V(H)|$ for all $H\subseteq D$.

The proof will go by induction on $m$, the size of $H\subseteq D$. It is clear that $c(D,H)=1$ when $|V(H)|=1$ or $2$.
Now let $m\geq 3$ and assume that there is no vertex of out-degree greater than or equal to $\lfloor \sqrt{2m} \rfloor$. Then by Lemma \ref{lem:diam2} we are done and $c(D,H)\leq \lfloor \sqrt{2m} \rfloor$. Now assume that there is a vertex $v$ of out degree at least $\lfloor \sqrt{2m} \rfloor$. Put a stationary cop on $v$ to protect $v$ and its out-neighborhood. From now on the robber cannot use these vertices or will be captured by the stationary cop. Therefore, we can remove $v$ and its out-neighbors from $H$ to make $H'$. Note that $|V(H')|\leq m-\lfloor \sqrt{2m} \rfloor-1$. By the induction hypothesis, we have $c(D,H)\leq 1+c(D,H') \leq 1+\sqrt{2(m-\lfloor \sqrt{2m} \rfloor-1)} \leq \lfloor \sqrt{2m}\rfloor$.

Since this inequality holds for all subgraphs $H$ of $D$, then, $c(D,D)=c(D)\leq \sqrt{2n}$.
\end{proof}

Let us mention that it is not clear whether our methods used for undirected graphs can be applied to digraphs or not.

\bibliographystyle{plain}
\bibliography{references}
\end{document}